\documentclass{siamltex}
\usepackage{amssymb}
\usepackage{bm} 
\usepackage{graphicx}
\usepackage{graphics}
\usepackage{caption2}
\usepackage{psfrag}
\usepackage{amsmath}
\usepackage{amscd}
\usepackage{amsfonts}
\usepackage{float}
\usepackage{latexsym}
\usepackage{lscape}
\usepackage{color}
\newtheorem{remark}{Remark}[section]

\newcommand{\ds}{\displaystyle}
\newcommand{\f}{\frac}

\begin{document}
\title{Computing solution landscape of nonlinear space-fractional problems via fast approximation algorithm}
\author{Bing Yu\thanks{School of Mathematical Sciences, Peking University, Beijing 100871, China (yubing93@pku.edu.cn)}
\and
Lei Zhang\thanks{Beijing International Center for Mathematical Research, Center for Quantitative Biology, Peking University, Beijing 100871, China (zhangl@math.pku.edu.cn)}
\and
Pingwen Zhang\thanks{School of Mathematical Sciences, Laboratory of Mathematics and Applied Mathematics, Peking University, Beijing 100871, China (pzhang@pku.edu.cn).}
\and
Xiangcheng Zheng\thanks{School of Mathematical Sciences, Peking University, Beijing 100871, China (zhengxch@math.pku.edu.cn)}
} 
\maketitle

\begin{abstract}
The nonlinear space-fractional problems often allow multiple stationary solutions, which can be much more complicated than the corresponding integer-order problems. In this paper, we systematically compute the solution landscapes of nonlinear constant/variable-order space-fractional problems. A fast approximation algorithm is developed to deal with the variable-order spectral fractional Laplacian by approximating the variable-indexing Fourier modes, and then combined with saddle dynamics to construct the solution landscape of variable-order space-fractional phase field model. Numerical experiments are performed to substantiate the accuracy and efficiency of fast approximation algorithm and elucidate essential features of the stationary solutions of space-fractional phase field model. Furthermore, we demonstrate that the solution landscapes of spectral fractional Laplacian problems can be reconfigured by varying the diffusion coefficients in the corresponding integer-order problems.
\end{abstract}

\begin{keywords}
solution landscape, phase field, variable-order, fractional Laplacian, saddle dynamics, stationary solution
\end{keywords}

\begin{AMS}
37M05, 49K35, 37N30, 35R11, 34A08
\end{AMS}

\pagestyle{myheadings}
\thispagestyle{plain}

\markboth{Yu, Zhang, Zhang and Zheng}{Solution landscape of space-fractional problems}

\section{Introduction}
Space-fractional partial differential equations (PDEs) provide adequate descriptions for many anomalous processes and complex dynamics, and thus have attracted extensive mathematical and numerical investigations in the past decade \cite{Aco,AntRau,Bon18,Marta,DenJan,Erv,
MaoShe,SheShe,SonXu,TanWan,TiaDu,XuDar,ZayKar}.   For instance, the space-fractional phase field models for two-phase flows were shown to achieve sharper interface compared to the classical integer-order model by simply varying the fractional order \cite{Duo,SonXuKar}. The variable-order space-fractional PDEs, in which the fractional order varies in space to accommodate the influence of variable spatial correlation of medium heterogeneity on tracer dynamics \cite{SunCheChe,Sun14}, improves the modeling capability of constant-order space-fractional PDEs and draws increasing attention in recent years \cite{EliGlu,ZenZha,ZheAA,ZheSINUM,ZhuLiu}.

Apart from focusing on modeling and analysis of nonlinear space-fractional PDEs, how to compute all stationary solutions of the corresponding models, which are the solutions of $\nabla E(u)=0$ for some energy functional $E$, is an important but challenging problem. According to the Morse theory \cite{Milnor}, the local stability of a stationary solution can be characterized by its Morse index, which is defined as the maximal dimension of a subspace on which the Hessian operator is negative definite. Compared with computing the stable (index $=0$) stationary solutions (i.e., minimizers), finding the unstable (index $>0$) stationary solutions (i.e., saddle points) are much more difficult because of their unstable nature.
Extensive numerical algorithms have been developed to compute multiple stationary solutions such as the eigenvector-following method \cite{Doye,ZhaSISC}, the deflation technique \cite{Farr} and the homotopy method \cite{All, Mehta}.
Recently, a solution landscape approach was proposed in \cite{YinPRL} to efficiently find the stationary solutions, including both stable minimizers and unstable saddle points, and their connections. By using the saddle dynamics to compute any-index saddle points \cite{YinSISC}, the downward and upward search algorithms were implemented to construct the solution landscape of the energy systems \cite{YinPRL}, which provides a hierarchy structure that starts with a parent state (the highest-index saddle point) and then relates the low-index saddle points down to the minimizers. 
The solution landscape approach has been successfully applied to study the defect landscape of nematic liquid crystals using a Landau--de Gennes model \cite{HanYin, WZZ}, Ericksen--Leslie Model \cite{Han2021}, and  the molecular Onsager model \cite{Yin2021}.

Despite the aforementioned progresses on finding multiple solutions of nonlinear PDEs, to the best of our knowledge, there is no investigation on solution landscape of the space-fractional problems involving the constant/variable-order fractional Laplacian and the comparisons with the analogous integer-order models in the literature. Due to the nonlocality of the fractional operators and the impact of the variable order, which results in the significant computational consumption, the corresponding fast algorithms are critical for the construction of solution landscape. One difficulty is due to the variable fractional order. Discretization of the variable-order space-fractional operator may generate a dense but non-Toeplitz coefficient matrix. Thus, the efficient computational methods like the fast Fourier transform (FFT) based algorithms cannot be applied directly. In a recent work, Pang and Sun studied a finite difference discretization of a variable-order Riemann-Liouville fractional advection-diffusion equation in one space dimension and approximated the off-diagonal blocks of the resulting stiffness matrices by low-rank matrices \cite{PanSun}. Then a polynomial interpolation based fast algorithm was presented to approximate the coefficient matrices, and the matrix-vector multiplication can be implemented in $O(kN \ln N)$ complexity with $N$ and $k$ being the number of unknowns and the approximant, respectively. Nevertheless, the fast algorithms for the variable-order fractional Laplacian in multiple space dimensions, the definition of which is completely different from the variable-order Riemann-Liouville fractional derivatives, remain untreated. 

Furthermore, a natural but unresolved question lies in the relationship between the variable order and the variable coefficient in fractional models \cite{LorHar,SunCheChe}, that is, is that possible to get identical solutions of variable-order fractional problems by varying the coefficients in their integer-order analogues? This concern is particularly critical for the spectral fractional Laplacian problems as they are defined in terms of the eigenpairs of second-order elliptic operators and may thus exhibit similar features of integer-order or local differential operators.

Motivated by these observations and questions, we investigate the solution landscapes of space-fractional problems involving both constant- and variable-order fractional Laplacian via saddle dynamics, and discover their similarities and differences to analogous integer-order models. The main contributions of the paper are enumerated as follows: 
\begin{itemize}
\item We develop a fast approximation algorithm to deal with the variable-order spectral fractional Laplacian in multiple space dimensions, for which the FFT cannot be applied directly. We approximate the variable-indexing Fourier modes of the fractional Laplacian by the series expansion method and prove that high-order accuracy of the Fourier mode approximation could be obtained using only $O(\ln M)$ terms in the expansion with $M$ being the size of the problems. We accordingly prove that computing the corresponding approximation of the variable-order spectral fractional Laplacian only requires $O(M\ln^2 M)$ operations by employing the FFT, which is much faster than implementing the direct discretization schemes of variable-order spectral fractional Laplacian that need $O(M^2\ln M)$ operations. We further implement this approximation scheme to saddle dynamics in order to compute any-index saddle points in the solution landscape.

\item We systematically construct the solution landscapes for both constant- and variable-order space-fractional phase field models, and compare them with their integer-order counterparts. Numerical experiments show that new stationary solutions emerge when the constant fractional order decreases, resulting in more complex solution landscape than their integer-order analogues. Moreover, due to the impact of variable fractional order, the solution landscape of variable-order fractional model becomes more complicated than the constant-order fractional model. We further demonstrate that the solution landscapes of spectral fractional Laplacian problems can be reconfigured by varying the diffusion coefficients in corresponding integer-order problems, which indicates the locality of spectral fractional Laplacian  as integer-order differential operators.
\end{itemize}

The rest of the paper is organized as follows: In Section 2 we go over preliminaries on fractional Laplacian operators and phase field models. In Section 3 we develop and analyze a fast approximation to the variable-order fractional Laplacian and carry out numerical experiments to test its accuracy and efficiency. In Section 4 we apply the fast approximation to saddle dynamics in order to construct the solution landscape of space-fractional phase field models. In Section 5 numerical experiments are performed to investigate the solution landscapes of space-fractional phase field model and make comparisons with their integer-order analogues.

\section{Preliminaries}
In this section we present brief reviews for the constant-order and variable-order fractional Laplacian operators and the corresponding phase field model.
\subsection{FFT-based approximation to fractional Laplacian}
Let $\mathbb Z$ be the set of integers, $N>0$ be an even number, $0\leq m\in\mathbb Z$, $\mathbb N:=\{z\in \mathbb Z:-N/2\leq z\leq N/2-1\}$, $\Omega=[0,1]^2$, $L^2(\Omega)$ be the space of square-integrable functions on $\Omega$ with the norm $\|\cdot\|_{L^2(\Omega)}$, $L^2_{per}(\Omega)$ be the space of $L^2$ functions with periodic boundary conditions on $\Omega$, and $H^m(\Omega)$ be the space of $L^2$ functions with $L^2$ derivatives up to order $m$ \cite{AdaFou}. A function $u\in L^2_{per}(\Omega)$ may be expanded as 
$$u(x,y)=\sum_{k,l\in\mathbb Z}u_{k,l}e^{2\pi \mathbf i(kx+ly)}$$
where $\bf i$ stands for the imaginary unit and $\{u_{k,l}\}$ are the Fourier coefficients of $u$ given as
$$u_{k,l}=\int_{0}^1\int_0^1u(x,y)e^{-2\pi \mathbf i(kx+ly)}dydx. $$
Then the fractional Laplacian $(-\Delta)^{\alpha/2}u(x,y)$ of order $0<\alpha\leq 2$ is defined for any $u\in L^2_{per}(\Omega) $ by \cite{AinMao,ZhaJia}
\begin{equation}\label{FL}
(-\Delta)^{\alpha/2}u(x,y)=\sum_{k,l\in\mathbb Z}\big[4\pi^2(k^2+l^2)\big]^{\alpha/2}u_{k,l}e^{2\pi \mathbf i(kx+ly)}.
\end{equation}
In practical computations, we only evaluate the truncation $(-\Delta)^{\alpha/2}_Nu$ of $(-\Delta)^{\alpha/2}u$ defined by
\begin{equation}\label{TrFL}
(-\Delta)^{\alpha/2}_Nu(x,y):=\sum_{k,l\in \mathbb N}\big[4\pi^2(k^2+l^2)\big]^{\alpha/2}u_{k,l}e^{2\pi \mathbf i(kx+ly)}.
\end{equation}

Define a uniform partition of $\Omega$ by $0=x_0<x_1<\cdots<x_N=1$ and $0=y_0<y_1<\cdots<y_N=1$ with $x_k=kh$ and $y_l=lh$ for $0\leq k,l\leq N$ and $h=1/N$. Then the Fourier coefficients $\{u_{k,l}\}_{k,l\in\mathbb N}$ could be approximated by
\begin{equation}\label{dft}
u_{k,l}\approx \hat u_{k,l}:=h^2\sum_{0\leq i,j< N}u(x_i,y_j)e^{-2\pi \mathbf i(kx_i+ly_j)}.
\end{equation}  
Then we replace $u_{k,l}$ by $\hat u_{k,l}$ in (\ref{TrFL}) to find a further approximation $(-\Delta)^{\alpha/2}_{N,h}u$ of $(-\Delta)^{\alpha/2}u$ given by
\begin{equation}\label{idft}
(-\Delta)^{\alpha/2}_{N,h}u(x_i,y_j):=\sum_{k,l\in \mathbb N}\big[4\pi^2(k^2+l^2)\big]^{\alpha/2}\hat u_{k,l}e^{2\pi \mathbf i(kx_i+ly_j)},~~0\leq i,j<N.
\end{equation}
The estimates of these discretizations are standard and we refer \cite{SheTanWan} for details. It is known that either the discrete Fourier transform (\ref{dft}) or the inversion procedure (\ref{idft}) could be efficiently carried out by means of FFT, which significantly reduces the computational consumption compared with the direct method.
\subsection{Variable-order fractional Laplacian}
Variable-order fractional Laplacian operator of order $0< \alpha(x,y)\leq 2$ is defined as \cite{AntCer,Far,Sun14,ZenZha,ZhuLiu}
\begin{equation}\label{VoFL}
(-\Delta)^{\alpha(x,y)/2}u(x,y):=\sum_{k,l\in\mathbb Z}\big[4\pi^2(k^2+l^2)\big]^{\alpha(x,y)/2}u_{k,l}e^{2\pi \mathbf i(kx+ly)}.
\end{equation}
Here we assume that $\alpha\in L^2_{per}(\Omega)$. 
\begin{remark}
The definition at $(x,y)=(\tilde x,\tilde y)$ may be rewritten as
 \begin{equation}\label{Direct}
(-\Delta)^{\alpha(\tilde x,\tilde y)/2}u(\tilde x,\tilde y)=\Big(\sum_{k,l\in\mathbb Z}\big[4\pi^2(k^2+l^2)\big]^{\alpha(\tilde x,\tilde y)/2}u_{k,l}e^{2\pi \mathbf i(kx+ly)}\Big)\Big|_{(x,y)=(\tilde x,\tilde y)},
\end{equation}
that is, it is equivalent to the constant-order fractional Laplacian (\ref{FL}) with $\alpha=\alpha(\tilde x,\tilde y)$ evaluated at $(x,y)=(\tilde x,\tilde y)$.
\end{remark}

We could follow (\ref{FL})-(\ref{idft}) to discretize $(-\Delta)^{\alpha(x,y)/2}u$ as
\begin{equation}\label{VOidft}
(-\Delta)^{\alpha(x_i,y_j)/2}_{N,h}u(x_i,y_j)=\sum_{k,l\in \mathbb N}\big[4\pi^2(k^2+l^2)\big]^{\alpha(x_i,y_j)/2}\hat u_{k,l}e^{2\pi \mathbf i(kx_i+ly_j)}
\end{equation}
for $0\leq i,j<N$. However, due to the variability of the fractional order, we could no longer apply the FFT for the efficient implementation, while the direct method is computationally expensive. To circumvent this difficulty, we develop a fast approximation for the variable-order fractional Laplacian operator based on the series expansion of the variable Fourier multiplier $\big[4\pi^2(k^2+l^2)\big]^{\alpha(x,y)/2}$ in Section \ref{sec:fa}.

\subsection{Space-fractional phase field model}
Phase field models have been developed for modeling the microstructure evolution precesses and interfacial problems such as grain growth, dislocation dynamics and phase transformation \cite{ChenLQ, ZhangChe}. For a mixture of two fluids, an order parameter $-1\leq u\leq 1$ is introduced to identify the regions occupied by the two fluids. A simple form of the Ginzburg-Laudau free energy with a double-well potential is given by 
\begin{equation}\label{PH}
	E(u) = \int_{\Omega} \frac{\kappa}{2} |\nabla u(x)|^2 + \frac{1}{4}\big(1-u^2(x)\big)^2 dx, 
\end{equation}
and the related phase field (Allen-Cahn) equation is 
\begin{equation}\label{AC}
	\dot{u} = -\frac{\delta E}{\delta u} = \kappa \Delta u + u - u^3, 
\end{equation}
where $\kappa$ is the diffusion coefficient,  a critical parameter determining the complexity of the system. 

In this paper, we consider the following variable-order space-fractional phase field equation
\begin{equation}\label{mh}
	\dot{u}=F(u):=-\kappa (-\Delta)^{\alpha(x,y)/2}u+u-u^3
\end{equation}
equipped with the periodic boundary condition, in which the Laplace operator $-\Delta$ in the classical phase field model \eqref{AC} is replaced by the variable-order space-fractional Laplace operator \eqref{VoFL} to accommodate the heterogeneity of the surroundings. 
It is worth mentioning that when the order $\alpha$ is not a constant, the dynamical system \eqref{mh} is a non-gradient system without the energy functional $E(u)$.

\section{Fast approximation to variable-order fractional Laplacian}\label{sec:fa}
We propose and analyze a fast scheme to the variable-order fractional Laplacian (\ref{VoFL}), and test its accuracy and efficiency in this section.
\subsection{One-dimensional illustration}\label{sec:1D}
We propose the key idea of the fast approximation by the one-dimensional (1D) analogue of (\ref{VoFL})
\begin{equation}\label{VoFL1D}
(-\Delta)^{\alpha(x)/2}u(x)=\sum_{k\in\mathbb Z}\big(4\pi^2k^2\big)^{\alpha(x)/2}u_{k}e^{2\pi \mathbf i kx}
\end{equation}
where the Fourier coefficients $\{u_k\}$ and their discretizations $\{\hat u_k\}$ are given by
$$u_{k}=\int_{0}^1u(x)e^{-2\pi \mathbf i kx}dx,~~\hat u_{k}=h\sum_{i=0}^{N-1}u(x_i)e^{-2\pi \mathbf i kx_i}. $$
Then we follow (\ref{FL})-(\ref{idft}) to discretize $(-\Delta)^{\alpha(x)/2}u$ as
\begin{equation}\label{VoFL1DTru}
(-\Delta)_{N,h}^{\alpha(x_i)/2}u(x_i)=\sum_{k\in \mathbb N}\big(4\pi^2k^2\big)^{\alpha(x_i)/2}\hat u_{k}e^{2\pi \mathbf i kx_i},~~0\leq i<N.
\end{equation}
We treat $\big(4\pi^2k^2\big)^{\alpha(x)/2}$ as a power function $g_k(z):=\big(4\pi^2k^2\big)^{z/2}$ for $k\neq 0$ and $0<z\leq 2$ such that $g_k(\alpha(x))=\big(4\pi^2k^2\big)^{\alpha(x)/2}$, and expand $g_k$ at $z=1$ as follows
\begin{equation}\label{gk}
\begin{array}{rl}
\ds  g_k(z)\hspace{-0.1in}&\ds=\sum_{s=0}^S \frac{g^{(s)}_k(1)}{\Gamma(s+1)}(z-1)^s+\frac{g^{(S+1)}_k(\xi)}{\Gamma(S+2)}(z-1)^{S+1}\\[0.1in]
&\ds=\sum_{s=0}^S \frac{\big(4\pi^2k^2\big)^{1/2}}{\Gamma(s+1)}\frac{\ln^s (4\pi^2 k^2)}{2^s}(z-1)^s\\[0.1in]
&\ds\qquad+\frac{\big(4\pi^2k^2\big)^{\xi/2}}{\Gamma(S+2)}\frac{\ln^{S+1} (4\pi^2 k^2)}{2^{S+1}}(z-1)^{S+1}=:G_{k}(z)+R_k(z).
\end{array} 
\end{equation}
Here $\xi$ lies in between $1$ and $z$ and $\Gamma(\cdot)$ represents the Gamma function. We first analyze the truncation error $R_k(z)$ in the following theorem.
\begin{theorem}\label{thm:R}
For $0<m\in \mathbb Z$, let $S=\big\lceil e^{\mu+1} \ln (\pi N)-1\big\rceil  $ with $\mu$ satisfying
\begin{equation}\label{mucond}
\mu e^{\mu+1}\geq m+2.
\end{equation}
Then the truncation error in (\ref{gk}) can be bounded by 
$$|R_k(z)|\leq N^{-m},~~k\in\mathbb N/\{0\},~~0<z\leq 2.$$
\end{theorem}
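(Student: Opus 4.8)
The plan is to bound the three factors in the remainder $R_k(z)$ separately and reduce everything to a single factorial-versus-power estimate. Since $0 < z \le 2$ forces $|z-1| \le 1$, the factor $(z-1)^{S+1}$ contributes at most $1$. For $k \in \mathbb{N}\setminus\{0\}$ the index satisfies $1 \le |k| \le N/2$, so $(4\pi^2 k^2)^{1/2} = 2\pi|k| \le \pi N$; because $\xi \le 2$ and $4\pi^2 k^2 > 1$, the factor $(4\pi^2 k^2)^{\xi/2}$ is largest at $\xi = 2$ and is bounded by $(\pi N)^2$. The same bound on $2\pi|k|$ gives $\ln(4\pi^2 k^2) \le 2\ln(\pi N)$, so that, writing $L := \ln(\pi N)$, the logarithmic factor $\ln^{S+1}(4\pi^2 k^2)/2^{S+1}$ is at most $L^{S+1}$. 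Collecting these and using $\Gamma(S+2) = (S+1)!$ reduces the theorem to showing
\[
\frac{(\pi N)^2 \, L^{S+1}}{(S+1)!} \le N^{-m}.
\]

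The core of the argument is then a Stirling-type estimate. Using $(S+1)! \ge \big((S+1)/e\big)^{S+1}$, I would rewrite $L^{S+1}/(S+1)! \le \big(eL/(S+1)\big)^{S+1}$. The choice $S = \lceil e^{\mu+1}L - 1\rceil$ ensures $S+1 \ge e^{\mu+1}L$, so that $eL/(S+1) \le e/e^{\mu+1} = e^{-\mu}$, and hence
\[
\frac{L^{S+1}}{(S+1)!} \le e^{-\mu(S+1)} \le e^{-\mu e^{\mu+1}L} = (\pi N)^{-\mu e^{\mu+1}},
\]
where I use $\mu>0$ (forced by (\ref{mucond})) for the middle inequality and the last equality merely unwinds $L = \ln(\pi N)$. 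The prefactor $(\pi N)^2$ is then absorbed precisely by the condition (\ref{mucond}): since $\mu e^{\mu+1} \ge m+2$, we obtain $(\pi N)^2 (\pi N)^{-\mu e^{\mu+1}} \le (\pi N)^{-m} \le N^{-m}$, the final step using $\pi \ge 1$.

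The main obstacle is not any single inequality but the bookkeeping that makes the growth of $L^{S+1}$ (driven by the large index $k \sim N$) be defeated by the factorial in the denominator. The mechanism is the balance built into the ansatz for $S$: choosing $S$ proportional to $L = \ln(\pi N)$ with the specific constant $e^{\mu+1}$ is exactly what pushes the ratio $eL/(S+1)$ below $e^{-\mu}$ and thereby converts polynomial-in-$N$ factors into the exponential decay $(\pi N)^{-\mu e^{\mu+1}}$. I expect the delicate point to be keeping track of the ``$+2$'' in (\ref{mucond}), which must be large enough to swallow the $(\pi N)^2$ arising from the bound on $(4\pi^2 k^2)^{\xi/2}$; a cruder bound on that factor would force a correspondingly larger constant in the condition on $\mu$.
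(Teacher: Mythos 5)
Your proof is correct and follows essentially the same route as the paper: bound $|z-1|^{S+1}\le 1$, $(4\pi^2k^2)^{\xi/2}\le \pi^2N^2$ and $\ln(4\pi^2k^2)/2\le\ln(\pi N)$, apply the Stirling-type bound $(S+1)!\ge((S+1)/e)^{S+1}$, and use the choice $S+1\ge e^{\mu+1}\ln(\pi N)$ together with $\mu e^{\mu+1}\ge m+2$ to convert the factorial decay into $(\pi N)^{-\mu e^{\mu+1}}$, which absorbs the $(\pi N)^2$ prefactor. The only cosmetic difference is that the paper verifies the target inequality by taking logarithms and substituting $S+1=e^{\mu+1}\ln(\pi N)$, whereas you multiply the bounds directly (and in doing so handle the ceiling in the definition of $S$ slightly more carefully).
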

\begin{remark}
This theorem implies that by selecting $S=O(\ln N)$, the truncation error $R_k(z)$ could be sufficiently small with high-order decay. 
\end{remark}
\begin{proof}
 We apply $|k|\leq N/2$, $|z-1|\leq 1$ and the Stirling's formula 
 $$\Gamma(S+2)>\frac{(S+1)^{S+3/2}}{e^{S+1}}$$ to bound the truncation error as follows
\begin{equation}\label{gky}
\begin{array}{rl}
\ds |R_k(z)|\hspace{-0.1in}&\ds =\bigg|\frac{\big(4\pi^2k^2\big)^{\xi/2}}{\Gamma(S+2)}\frac{\ln^{S+1} (4\pi^2 k^2)}{2^{S+1}}(z-1)^{S+1}\bigg|\\
&\ds\leq \frac{4\pi^2k^2}{(S+1)^{S+3/2}e^{-(S+1)}}\frac{\ln^{S+1} (4\pi^2 k^2)}{2^{S+1}}\\
&\ds\leq 4\pi^2k^2\bigg(\frac{e\ln (4\pi^2 k^2)}{2(S+1)}\bigg)^{S+1}\leq \pi^2N^2\bigg(\frac{e\ln (\pi^2 N^2)}{2(S+1)}\bigg)^{S+1}.
\end{array}
\end{equation}  
Therefore, $S$ may be chosen such that the following inequality holds 
\begin{equation}\label{gky1}
 \pi^2N^2\bigg(\frac{e\ln (\pi^2 N^2)}{2(S+1)}\bigg)^{S+1}\leq N^{-m},
\end{equation}
which is equivalent to 
\begin{equation}\label{gky2}
\begin{array}{rl}
\ds \ln \pi^2+(S+1)\ln\frac{e\ln (\pi^2 N^2)}{2(S+1)}\hspace{-0.1in}&\ds\leq -(m+2)\ln N\\[0.15in]
&\ds=-\frac{m+2}{2}\ln (\pi^2N^2)+\frac{m+2}{2}\ln (\pi^2).
\end{array}
\end{equation}
Thus it suffices to find $S$ such that
\begin{equation}\label{gky3}
(S+1)\ln\frac{e\ln (\pi^2 N^2)}{2(S+1)}\leq -\frac{m+2}{2}\ln( \pi^2N^2).
\end{equation}
To further simplify this equation, we introduce the substitution 
\begin{equation}\label{sub}
S+1=\frac{e^{\mu+1}}{2} \ln (\pi^2N^2)\mbox{ for }S>\frac{e^{2}}{2} \ln (\pi^2N^2)-1
\end{equation} 
and for some $\mu>1$. Invoking this transformation in (\ref{gky3}) yields
\begin{equation}\label{gky4}
\ds (S+1)(-\mu)\leq -\frac{m+2}{2}\ln( \pi^2N^2).
\end{equation}
Combining (\ref{sub}) and (\ref{gky4}) leads to
\begin{equation}
\frac{e^{\mu+1}}{2} \ln (\pi^2N^2)\geq\frac{m+2}{2\mu}\ln( \pi^2N^2),
\end{equation}
that is, $\mu e^{\mu+1}\geq m+2$, which is exactly (\ref{mucond}).
We thus conclude that by selecting 
$$S=\bigg\lceil \frac{e^{\mu+1}}{2} \ln (\pi^2N^2)-1\bigg\rceil  $$ with $\mu$ satisfying (\ref{mucond}), the truncation error in (\ref{gk}) can be bounded by $|R_k(z)|\leq N^{-m}$, which completes the proof.
\end{proof}

Based on this theorem, we could substitute $g_k(\alpha(x))$ by $G_k(\alpha(x))$ for $k\in\mathbb N/\{0\}$ in (\ref{VoFL1DTru}) and notice that $g_0(\alpha(x))=0$ to reach a further approximation 
 \begin{equation}\label{VoFL1DApp}
 \begin{array}{l}
 \ds(-\Delta)_{N,h,F}^{\alpha(x_i)/2}u(x_i)= \sum_{k\in \mathbb N/\{0\}}G_k(\alpha(x_i))\hat u_{k}e^{2\pi \mathbf ikx_i} \\[0.1in]
 \ds\quad\quad\quad=\sum_{s=0}^S(\alpha(x_i)-1)^s\sum_{k\in \mathbb N/\{0\}} \frac{\big(4\pi^2k^2\big)^{1/2}}{\Gamma(s+1)}\frac{\ln^s (4\pi^2 k^2)}{2^s} \hat u_{k}e^{2\pi \mathbf ikx_i}
 \end{array}
\end{equation}
for $0\leq i<N$. 
\begin{theorem}\label{thm:cpu}
The implementation of (\ref{VoFL1DApp}) for $0\leq i<N$ requires $O(N\ln^2 N)$ operations via the FFT, which is much faster than the evaluation of (\ref{VoFL1DTru}) for $0\leq i<N$ that needs $O(N^2\ln N)$ operations.
\end{theorem}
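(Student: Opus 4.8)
The plan is to exploit the \emph{separation of variables} that the expansion (\ref{gk}) has introduced into (\ref{VoFL1DApp}): in the inner sum over $k$ the coefficients depend only on $k$ and on the expansion index $s$, while \emph{all} dependence on the evaluation index $i$ has been isolated into the scalar weight $(\alpha(x_i)-1)^s$. This decoupling is exactly what makes a single family of fast transforms sufficient. Concretely, for each $s\in\{0,1,\ldots,S\}$ I would introduce the $i$-independent coefficients
$$ w_k^{(s)} := \frac{(4\pi^2k^2)^{1/2}}{\Gamma(s+1)}\,\frac{\ln^s(4\pi^2k^2)}{2^s}\,\hat u_k,\qquad k\in\mathbb N/\{0\}, $$
and set $v_s(x_i):=\sum_{k\in\mathbb N/\{0\}} w_k^{(s)}\,e^{2\pi\mathbf i kx_i}$. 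For fixed $s$ the array $\{v_s(x_i)\}_{0\le i<N}$ is precisely the inverse discrete Fourier transform of $\{w_k^{(s)}\}_k$, so it can be produced at all $N$ grid points at once by a single inverse FFT in $O(N\ln N)$ operations.

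Next I would account for every auxiliary cost and check it is subdominant. The weights satisfy the recursion $w_k^{(s)}=w_k^{(s-1)}\cdot\frac{\ln(4\pi^2k^2)}{2s}$, so, after an $O(N)$ precomputation of $\{\ln(4\pi^2k^2)\}_k$ and the single forward FFT that yields $\{\hat u_k\}$ in $O(N\ln N)$, all the $w_k^{(s)}$ are generated in $O(NS)$ operations. Repeating the inverse FFT for each $s$ delivers all $\{v_s(x_i)\}$ in $(S+1)\cdot O(N\ln N)$ operations. Finally (\ref{VoFL1DApp}) is assembled pointwise as $\sum_{s=0}^S(\alpha(x_i)-1)^s v_s(x_i)$, which for each $i$ is a degree-$S$ polynomial in $(\alpha(x_i)-1)$ evaluated by Horner's rule in $O(S)$ operations, hence $O(NS)$ in total. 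Invoking Theorem \ref{thm:R} I may take $S=O(\ln N)$, so the bottleneck is the $S+1=O(\ln N)$ inverse FFTs and the total cost is $O(N\ln^2 N)$, the remaining steps ($O(N\ln N)$ for the forward FFT and $O(NS)=O(N\ln N)$ for coefficient generation and Horner assembly) being of lower order.

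For the comparison I would stress the structural reason the direct scheme is slower: in (\ref{VoFL1DTru}) the Fourier multiplier $(4\pi^2k^2)^{\alpha(x_i)/2}$ couples $k$ and $i$ inseparably, so no single transform can serve all grid points and the coefficient sequence must be rebuilt for every $x_i$. Carrying out the resulting inverse transform separately at each of the $N$ evaluation points by FFT costs $O(N\ln N)$ apiece, for a total of $O(N^2\ln N)$. Thus the fast scheme replaces the $N$ point-dependent transforms of the direct method by only $O(\ln N)$ point-independent ones, which is precisely the source of the speedup.

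I expect the genuine content here to be a matter of careful bookkeeping rather than a hard obstacle, since the statement is a complexity count. The one point that must be pinned down is that every step other than the inverse FFTs is $O(N\ln N)$ or smaller, in particular that the coefficient recursion is $O(1)$ per entry and the Horner assembly $O(S)$ per point, and — most importantly — that Theorem \ref{thm:R} licenses the choice $S=O(\ln N)$, because it is exactly this logarithmic truncation length that converts the $N$ transforms of the direct scheme into the $O(\ln N)$ transforms of the fast one.
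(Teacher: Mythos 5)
Your proposal is correct and follows essentially the same approach as the paper's proof: for each of the $S+1=O(\ln N)$ expansion terms (licensed by Theorem \ref{thm:R}) the inner sum over $k$ is $i$-independent and is computed by one inverse FFT in $O(N\ln N)$ operations, whereas the direct scheme must rebuild the multiplier $(4\pi^2k^2)^{\alpha(x_i)/2}$ and redo the inverse transform at each of the $N$ grid points, giving $O(N^2\ln N)$. Your extra bookkeeping (the weight recursion, the Horner assembly, and the check that these are subdominant at $O(NS)=O(N\ln N)$) simply makes explicit what the paper's terser argument leaves implicit.
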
 
\begin{proof}
For the direct implementation by (\ref{VoFL1DTru}), although $\{\hat u_k\}_{k=-N/2}^{N/2-1}$ could be efficiently computed via the FFT and stored, at each $x_i$ all Fourier modes in (\ref{VoFL1DTru}) should be updated and the inverse FFT needs to be performed due to the varying of the fractional order $\alpha(x)$. Therefore, totally $N\ln N+N\cdot O(N\ln N)=O(N^2\ln N)$ operators are required, which proves the computational cost of (\ref{VoFL1DTru}).

We then consider the computational cost of the fast scheme (\ref{VoFL1DApp}). By Theorem \ref{thm:R}, $S$ has only the magnitude of $O(\ln N)$ and for each $0\leq s\leq S$, the inner summation with $0\leq i<N$ can be evaluated efficiently via the FFT and inverse FFT, which requires $O(N\ln N)$ operators. Therefore, the total conputaitonal cost of the fast scheme (\ref{VoFL1DApp}) is $O(N\ln^2N)$, which completes the proof.
\end{proof}
 \begin{remark} 
 If we directly evaluate the sum on the right-hand side of (\ref{VoFL1DTru}) for each $i$ without using the inverse Fourier transform, the total computational cost of  (\ref{VoFL1DTru}) for $0\leq i<N$ is $O(N^2)$, which is a bit smaller than $O(N^2\ln N)$ proved in the above theorem. However, this could introduce numerical inaccuracies and inconveniences compared with employing the inverse Fourier transform. Furthermore, improving the computational efficiency by a logarithmic factor $\ln N$ does not bring essential differences.
 \end{remark}
\subsection{Accuracy and efficiency tests}
We numerically test both the accuracy and the computational consumptions (i.e., the CPU times) of the fast method (\ref{VoFL1DApp}) by comparing it with the direct method in both constant-order and variable-order cases. Here the direct method for constant-order fractional Laplacian means evaluating the 1D analogue of (\ref{idft}) by the FFT and its inversion, and for the variable-order case, the direct method means evaluating it pointwisely like a constant-order fractional Laplacian. 

For the accuracy test, we choose $u(x)=x^2(1-x)^2$ or $u(x)=\sin(6\pi x)$ under $N=2^{12}$ or $N=2^{18}$ and $\alpha(x)=1.5$ or $\alpha(x)=1.5+0.4\sin(2\pi x)$, and the $L^2$ errors between the fast method and the direct method are presented in Figure \ref{plot1}. We find that the machine accuracy was reached in all three plots when $S> 25$, and a significant increment of $N$ (e.g., $N$ becomes $2^{18}$ from $2^{12}$ in  Figure \ref{plot1}(B)) only has slight affects on the threshold value of $S$ (that is, $S$ should be bigger than $25$ in Figure \ref{plot1}(B) instead of $22$ in Figure \ref{plot1}(A) such that the the machine accuracy could be reached). These observations are consistent with the analysis in Theorem \ref{thm:R} in that the $S$ does not heavily depend on $N$ and demonstrate the effectiveness of the fast method (\ref{VoFL1DApp}).

\begin{figure}[htbp]
	\setlength{\abovecaptionskip}{0pt}
	\centering		
	\includegraphics[width=1\linewidth]{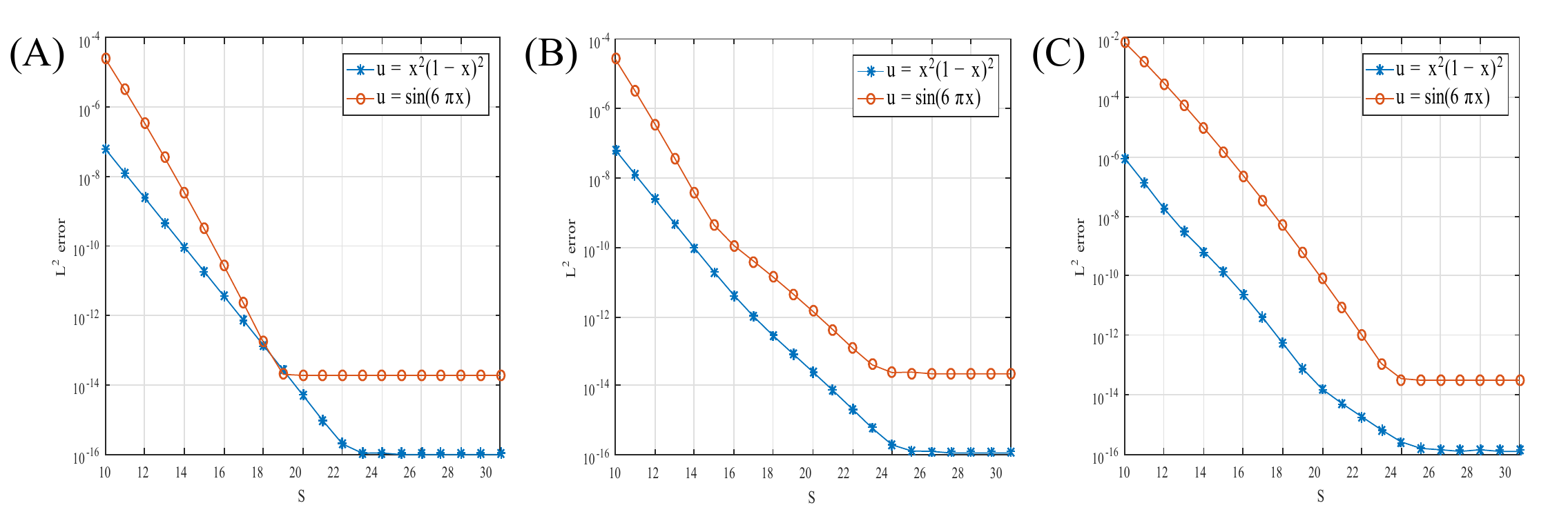}
	\caption{Plots of $L^2$ errors under (A) $\alpha=1.5$ and $N=2^{12}$; (B) $\alpha=1.5$ and $N=2^{18}$ and (C) $\alpha=1.5+0.4\sin(2\pi x)$ and $N=2^{12}$.}
	\label{plot1}
\end{figure}

We then test the efficiency of the fast method. Motivated by the observations, we take $S=25$ and choose $u(x)=x^2(1-x)^2$ and $\alpha(x)=1.5$ or $\alpha(x)=1.5+0.4\sin(2\pi x)$, and the results are presented in Figure \ref{plot2}. In Figure \ref{plot2}(A), in which the $\alpha$ is a constant, the CPU times of both the direct method and the fast method grow linearly as their operations are $O(N\ln N)$ and $O(N\ln^2 N)$, respectively. In particular, the CPU times of the fast method are roughly $S$ multiples of the corresponding CPU times of the direct method due to the series expansion. However, in the case of variable fractional order in Figure \ref{plot2}(B), the CPU times of the direct method grow quadratically while the fast method still keeps a linear growth of the CPU time, as their operations are $O(N\ln^2N)$ and $O(N^2\ln N)$, respectively, as proved in Theorem \ref{thm:cpu}. These tests fully substantiate the efficiency of the fast method.

\begin{figure}[htbp]
	\setlength{\abovecaptionskip}{0pt}
	\centering
	\includegraphics[width=1\linewidth]{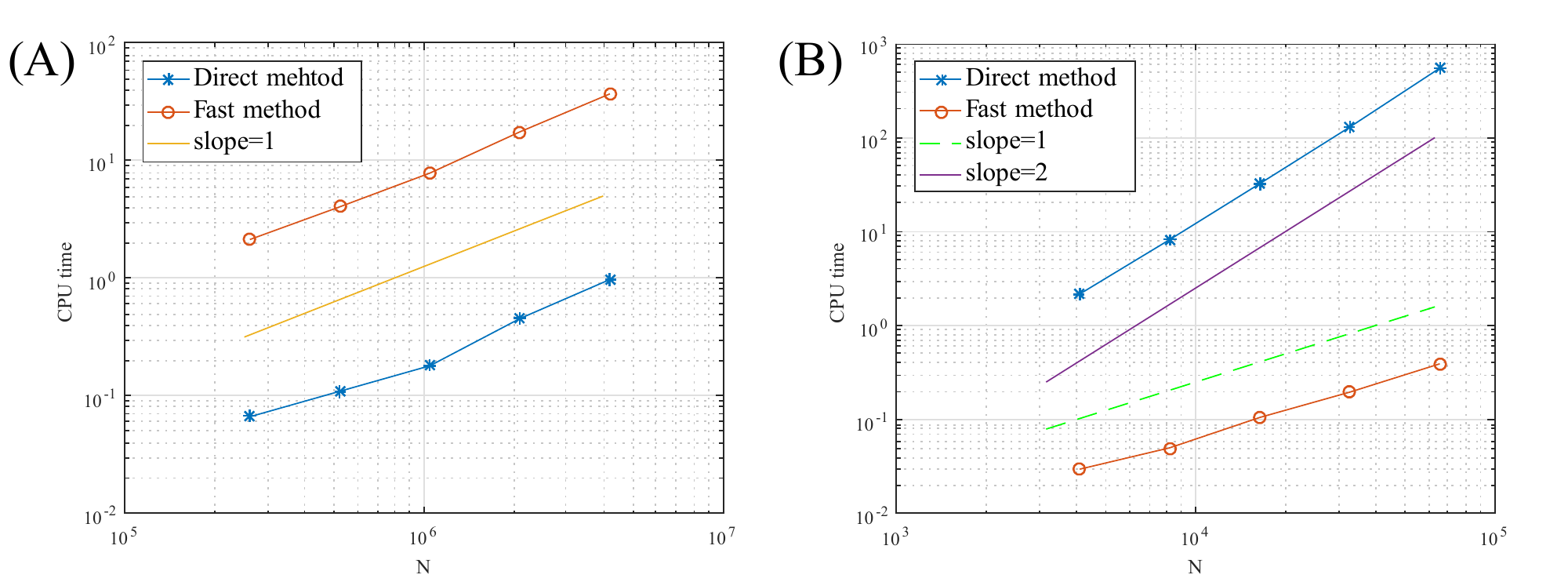}
	\caption{Plots of CPU times under (A) $\alpha=1.5$ and (B) $\alpha=1.5+0.4\sin(2\pi x)$.}
	\label{plot2}
\end{figure}

\subsection{Two-dimensional (2D) extension}
For the 2D approximated operator (\ref{VOidft}), we could similarly consider the mode $\big(4\pi^2(k^2+l^2)\big)^{\alpha(x,y)/2}$ as a power function $g_{k,l}(z):=\big(4\pi^2(k^2+l^2)\big)^{z/2}$ for $(k,l)\neq (0,0)$ and expand this function at $z=1$ like (\ref{gk}) to find the following approximation to $(-\Delta)_{N,h}^{\alpha(x,y)/2}u(x,y)$
 \begin{equation}\label{VoFL1DApp22}
 \begin{array}{l}
 \ds (-\Delta)_{N,h,F}^{\alpha(x_i,y_j)/2}u(x_i,y_j):=\sum_{s=0}^S(\alpha(x_i,y_j)-1)^s\\[0.1in]
 \ds\qquad\times\sum_{k,l\in\mathbb N,(k,l)\neq(0,0)} \frac{\big(4\pi^2(k^2+l^2)\big)^{1/2}}{\Gamma(s+1)}\frac{\ln^s (4\pi^2 (k^2+l^2))}{2^s} \hat u_{kl}e^{2\pi \mathbf i(kx_i+ly_j)}
 \end{array}
\end{equation}
for $0\leq i,j<N$, where, similar to the derivations in Section \ref{sec:1D}, we require
$$S=\big\lceil e^{\mu+1} \ln (\sqrt{2}\pi N)-1\big\rceil  $$ with $\mu$ satisfying (\ref{mucond}) in order that the error of approximating $\big(4\pi^2(k^2+l^2)\big)^{\alpha(x,y)/2}$ can be bounded by 
$ N^{-m}$.

It is worth mentioning that the higher dimension of the problem is, the more computational cost we could save. This is due to the fact that the number of performing FFT and its inversion could be reduced from $M$ in the direct method to $S=O(\ln M)$ in the fast method where $M$ represents the size of the problem. Therefore, the high dimension will lead to a very large $M$ and consequently the fast method will significantly reduce the computational costs.

\section{Construction of solution landscape}\label{sec:sd}
In this section we combine the saddle dynamics with fast approximation algorithm of the variable-order fractional Laplacian to construct the solution landscapes of the variable-order space-fractional problem. 
\subsection{Saddle dynamics}\label{sec:SD}
The saddle dynamics is designed for searching the saddle points of a dynamical system $
	\dot{\bm u} = \bm F(\bm u),
$
where $\bm F:\mathbb R^M\rightarrow \mathbb R^M$ for $0<M\in\mathbb Z$ is a twice continuously differentiable function and $\bm u\in \mathbb R^M$ \cite{Wig}. 
For a gradient system with an energy function $E(\bm u)$, $\bm F$ equals to the negative gradient of $E(u)$, and the Morse index of a stationary solution
 $\bm u^*$ is equal to the number of negative eigenvalues of the Hessian $G(\bm u^*) = \nabla^2 E(\bm u^*)$. Denote the eigenvectors corresponding to negative eigenvalues as $\{\bm v_i\}_{i=1}^k$ and define the unstable subspace as $\mathcal{W}^u:=\text{span}\{\bm v_1, \cdots, \bm v_k\}$. Then $\bm u^*$ is a local maximum on the linear manifold $\bm u^* + \mathcal{W}^u$ and a local minimum on $\bm u^* + (\mathcal{W}^u)^\perp$. 
Based on this property, the saddle dynamics for an index-$k$ saddle point of the gradient system \cite{YinSISC} is written as
\begin{equation}\label{HiSD}
	\left\{
	\begin{array}{l}
		\ds \dot{\bm u} =\Big(I -2\sum_{j=1}^k \bm v_j \bm v_j^\top \Big)\bm F(\bm u),\\[0.025in]
		\ds \dot{\bm v}_i = -\Big(I-\bm v_i \bm v_i^\top - 2\sum_{j=1}^{i-1} \bm v_j \bm v_j^\top \Big)G(\bm u)\bm v_i,~~ i = 1, \cdots, k, \\[0.175in]
	\end{array}
	\right.
\end{equation}
with initial conditions
\begin{equation}\label{IC}
	\ds \bm u(0)=\bm u^0 \in \mathbb R^M, ~~\bm v_i(0)=\bm v_i^0 \in \mathbb R^M ,~~\bm v_i^\top \bm v_j=\delta_{ij},~~  i,j = 1, \cdots, k. 
\end{equation}
Here $\bm u$ is a state variable and $\{\bm v_i\}_{i = 1}^k$ are $k$ direction variables that are used to approximate an orthogonal basis of the unstable subspace $\mathcal{W}(\bm u)$.
The dynamics of $\bm u$ in \eqref{HiSD} represents a transformed gradient flow
\begin{equation}\label{GF}
	\dot{\bm u} = -\mathcal{P}_{\mathcal{W}^u(\bm u)} \bm F (\bm u) + \mathcal{P}_{(\mathcal{W}^u(\bm u))^\perp} \bm F (\bm u) = (I - 2 \mathcal{P}_{\mathcal{W}^u(\bm u)}) \bm F (\bm u), 
\end{equation}
while the dynamics of $\bm v_i$ in \eqref{HiSD} is to find the unit eigenvector of the $i$-th smallest eigenvalue of $G(\bm u)$ with the knowledge of $\bm v_1, \cdots, \bm v_{i-1}$.

For a general non-gradient system, the Jacobian $J(\bm u) = \nabla \bm F(\bm u)$ should be used to substitute the self-joint Hessian in a gradient system. The index of a stationary point should be defined as the number of eigenvalues with positive real parts. The corresponding saddle dynamics for the non-gradient system \cite{YinSCM} reads 
\begin{equation}\label{GHiSD}
	\left\{
	\begin{array}{l}
		\ds \dot{\bm u} =\Big(I -2\sum_{j=1}^k \bm v_j \bm v_j^\top \Big)\bm F(\bm u),\\[0.025in]
		\ds \dot{\bm v}_i= \big(I-\bm v_i\bm v_i^\top\big)J(\bm u)\bm v_i-\sum_{j=1}^{i-1} \bm v_j \bm v_j^\top \big(J(\bm u)+J^\top(\bm u)\big) \bm v_i,~~ i = 1, \cdots, k. \\[0.175in]
	\end{array}
	\right.
\end{equation}
It was shown in \cite[Theorem 2.1]{YinSCM} that a linearly stable solution of (\ref{GHiSD}) is an index-k stationary point of the system $
\dot{\bm u} = \bm F(\bm u)$. 

The numerical discretizations for \eqref{HiSD} and \eqref{GHiSD} could have a uniform form as follows 
\begin{equation}\label{DGHiSD}
\left\{
\begin{array}{l}
\ds \bm u^{m+1} = \bm u^{m}+\tau\Big(I -2\sum_{j=1}^k \bm v^{m}_j(\bm v^{m}_j)^\top \Big)\bm F(\bm u^{m}),\\[0.025in]
\ds \tilde{\bm v}^{m+1}_i=\tilde{\bm v}^{m}_i+\sigma \frac{\bm F(\bm u^{m+1}+l \bm v_i^{m})-\bm F(\bm u^{m+1}-l \bm v_i^{m})}{2l},~~  i = 1, \cdots, k,\\[0.175in]
\ds \big[\bm v^{m+1}_1,\cdots,\bm v^{m+1}_k\big]=\mbox{orth}\big[\tilde{\bm v}^{m+1}_1,\cdots,\tilde{\bm v}^{m+1}_k\big].
\end{array}
\right.
\end{equation}
Here $l > 0$ is a small constant and $\tau,\sigma>0$ are step sizes. A dimer centered at $\bm u^{m + 1}$ with the direction $\bm v_i$ and length $2l$
\begin{equation}\label{dimer}
	\f{\bm F(\bm u^{m+1}+l \bm v_i^{m}) - \bm F(\bm u^{m+1}-l \bm v_i^{m})}{2l}
\end{equation}
is applied to approximate $J(\bm u^{m + 1})\bm v_i$, and $\text{orth}(\cdot)$ represents the normalized orthogonalization function, which could be realized by a Gram-Schmidt procedure.

When $\bm F$ involves the variable-order fractional Laplacian (\ref{VoFL}), we can implement the fast evaluation schemes (\ref{VoFL1DApp}) or (\ref{VoFL1DApp22}) in the numerical discretization of saddle dynamics (\ref{DGHiSD}). For instance, in the space-fractional phase field model \eqref{PH}, we could formulate the corresponding saddle dynamics and apply \eqref{VoFL1DApp22} to evaluate
$$\begin{array}{l}
	\ds F(u)|_{(x,y)=(x_i,y_j)}\approx \hat F_{i,j}\\[0.05in]
	\ds\qquad:=-\kappa (-\Delta)_{N,h,F}^{\alpha(x_i,y_j)/2}u(x_i,y_j)+u(x_i,y_j)-u^3(x_i,y_j),~~0\leq i,j<N.
\end{array}  $$

\subsection{Algorithm of solution landscape}


Using the saddle dynamics in Section \ref{sec:SD}, solution landscape of the aforementioned dynamical system can be constructed systematically via the downward and upward search algorithms \cite{YinSCM}. 

The downward search algorithm enables us to search lower-index saddles and stable states from a high-index saddle point. Given an index-$k$ saddle point and its $k$ unstable directions denoted by $(\hat{\bm u}, \hat{\bm v}_1, \cdots, \hat{\bm v}_k)$ as a parent state, we could choose index-$m$ saddles with $0 \le m < k$ as our targets. In practice, a perturbed $\hat{\bm u}$ along the direction $\hat{\bm v}_i $ for some $m < i \le k$ with the directions $\{\hat{\bm v}_j\}_{j=1}^m$, that is, $(\hat{\bm u} \pm \epsilon \hat{\bm v}_i, \hat{\bm v}_1, \cdots, \hat{\bm v}_{m})$, will serve as the initial conditions in (\ref{IC}) in the implementation of the discrete saddle dynamics (\ref{DGHiSD}). By repeating this procedure on the newly found saddles, the pathway map under the given index-$k$ saddle could be constructed completely.

Inversely, the upward searching is adopted when the parent state is unknown or multiple parent states exist. 
To search an index-$m$ saddle point ($m > k$) from an index-$k$ saddle point $(\hat{\bm u}, \hat{\bm v}_1, \cdots, \hat{\bm v}_k)$, 
we need to compute $m$ eigendirections by the dynamics of $v_i$ in \eqref{DGHiSD} as ascent searching directions, where the first $k$ directions have been given by the index-$k$ saddle. Then we could start from the initial states $(\hat{\bm u} \pm \epsilon \hat{\bm v}_i, \hat{\bm v}_1, \cdots, \hat{\bm v}_{m})$ for some $k <i \le m$ to search the index-$m$ saddle point. Combining the downward and upward search algorithms, we could navigate up and down on the solution landscape and find all connected saddle points and stable states. 

\section{Numerical experiments}
We perform numerical simulations to construct the solution landscapes of constant-order and variable-order space-fractional phase field models (\ref{AC}) in one and two space dimensions and compare them with their integer-order analogues.
\subsection{One-dimensional problem}
We first consider 1D space-fractional phase field equation (\ref{mh}) to understand the global structures of this model. The problem is restricted on the space interval $[0,1]$ and the mesh size $ h = 2^{-7}$ is chosen for spatial discretization.
For any $\kappa > 0$ and periodic fractional order $\alpha(x)$, three homogeneous phases, namely, $u_1\equiv 1$, $u_{-1}\equiv -1$ and $u_0\equiv 0$, remain to be stationary solutions of \eqref{mh}. By analyzing the linear stability of the corresponding Jacobian operator, both $u_1$ and $u_{-1}$ are steady states of \eqref{AC}, while $u_0$ is a saddle point and its index increases as $\kappa$ or $\alpha$ decreases. Using $u_0$ as the parent state, we construct the solution landscape by downward search via saddle dynamics in Section \ref{sec:sd}.

\subsubsection{Constant-order case}

\begin{figure}[hbt]
	\centering
	\includegraphics[width=0.9\linewidth]{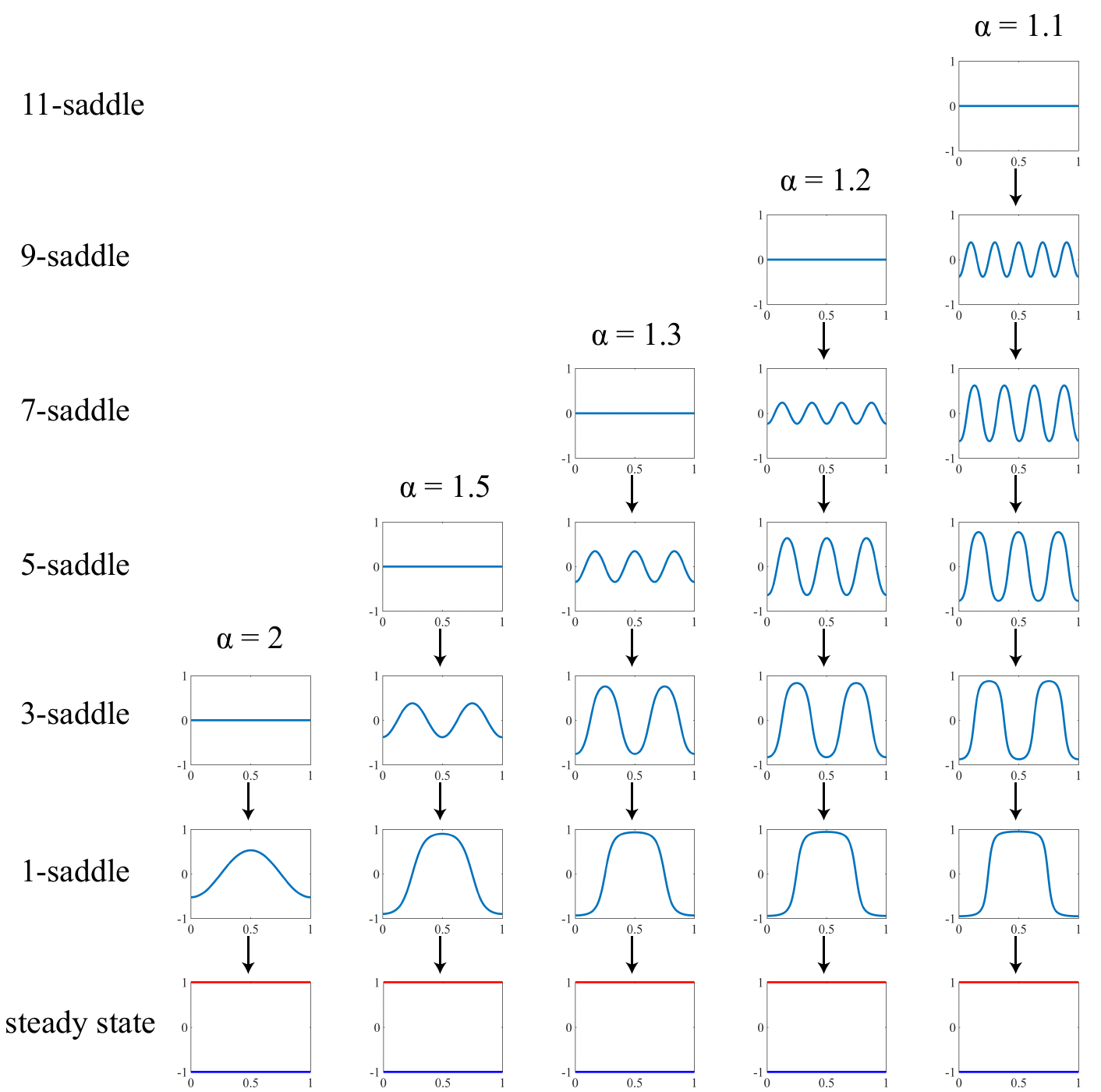}
	\caption{Solution landscapes of 1D space-fractional phase field model \eqref{mh} with different constant fractional order $\alpha$ at $\kappa = 0.02$.}
	\label{fig:constantorder1d}
\end{figure}
First, we fix the diffusion coefficient $\kappa = 0.02$ and vary the fractional order $\alpha$ to compute the corresponding solution landscapes in Figure \ref{fig:constantorder1d}. 
Each image in Figure \ref{fig:constantorder1d} denotes a stationary solution, and each column shows the solution landscape with different $\alpha$. Each solid arrow represents a saddle dynamics. 
Because of the periodic boundary condition, a translation of an inhomogeneous stationary solution is still a stationary solution, and only one phase is shown in the solution landscape. 
In addition, if $\hat{\bm u}$ is a stationary solution of \eqref{AC}, so is $-\hat{\bm u}$. 
As we can seen in Figure  \ref{fig:constantorder1d}, $u_1\equiv 1$ and $u_{-1}\equiv -1$ remain to be steady states with the changing of $\alpha$. The index-1 saddle points at different $\alpha$ show a single peak, but the interface becomes sharper as $\alpha$ is smaller. Furthermore, when $\alpha$ decreases, the index of the homogeneous state $u_0$ becomes larger, leading to the emergence of more high-index saddle points. 

The bifurcation graph in Figure \ref{fig:bifurcation}(A) is plotted to explain the jump of the index of $u_0$ and the appearance of new stationary solutions. With the order $\alpha$ decreasing, the non-homogeneous states with larger wave number will appear one by one through pitchfork bifurcations (denoted by dash lines), and then amplify their amplitudes.
We also present the index of $u_0$ against $\alpha$ at different $\kappa = 0.02, 0.015, 0.01$ in Figure  \eqref{fig:bifurcation}(B). The three curves show similar behaviors. Decreasing $\alpha$ leads to the increase of the index of $u_0$, and smaller $\kappa$ results in the higher-index of $u_0$.
Furthermore, a coarse graph of the index of $u_0$ on the parameter space $(\kappa, \alpha)$, shown in Figure \ref{fig:bifurcation}(C), demonstrates how $\alpha$ and $\kappa$ affect the index of $u_0$. It is worth mentioning that decreasing $\alpha$ and $\kappa$ has similar affects on the index of $u_0$. 

\begin{figure}[hbt]
	\centering
	\includegraphics[width=0.9\linewidth]{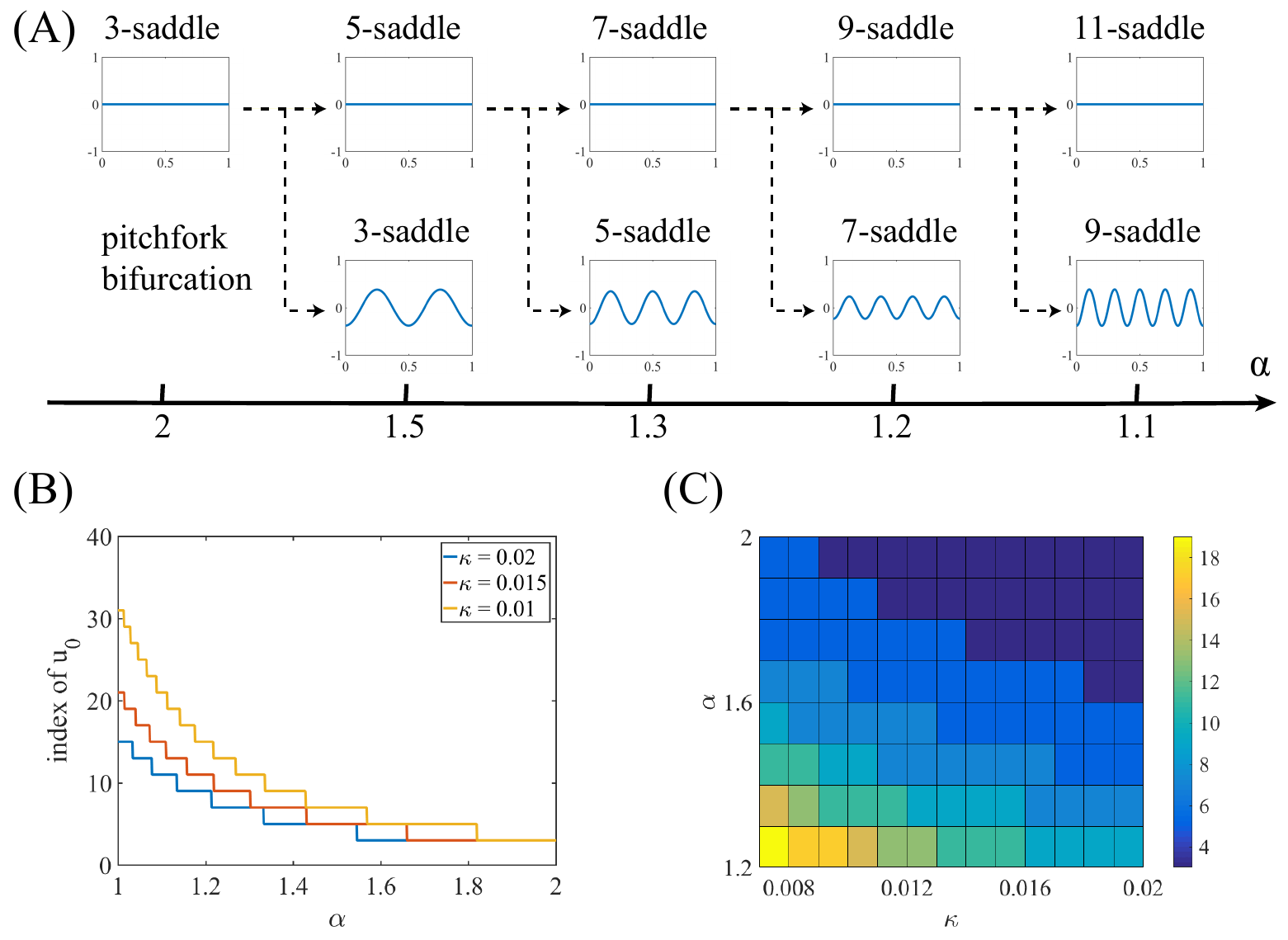}
	\caption{(A) Bifurcation graph with respect to the fractional order $\alpha$ at $\kappa = 0.02$. (B) Index of $u_0$ against $\alpha$ at $\kappa = 0.02, 0.015, 0.01$. (C) Index of $u_0$ on the space $(\kappa, \alpha)$. }
	\label{fig:bifurcation}
\end{figure}

Motivated by the finding in Figure \ref{fig:bifurcation}(C), we can choose suitable $\kappa$ in the integer-order phase field model \eqref{AC} by matching the index of $u_0$ at different $\alpha$ and compare their solution landscapes to reveal the relationship between the fractional Laplacian and the integer-order Laplacian. In Figure \ref{fig:alphakappa}(A), we compute the solution landscapes with different $\kappa$ at the integer-oder $\alpha = 2$. By matching the index order of $u_0$, the solution landscapes with different $\kappa$ are very similar to the solution landscapes with different $\alpha$ in Figure \ref{fig:constantorder1d}. 
To be more specific, we compare the index-$1$ saddle points with different $\kappa$ and $\alpha$ in Figure \ref{fig:alphakappa}(B) and (C), respectively. It shows similar interfaces between two phases and they becomes sharper with $\kappa$ or $\alpha$ decreasing. 
There is a subtle difference between the fractional order and the integer order. The maxima (minima) of saddle points with fractional order $\alpha$ could not reach $+1(-1)$, which is different from their integer-order analogues.
Nevertheless, comparison of these solution landscapes shows that the spectral fractional Laplacian operator exhibits similar properties as the integer-order Laplacian by choosing the suitable diffusion coefficient. In fact, because the spectral fractional laplacian is defined in terms of the eigenfunctions of the integer-order Laplacian, it may perform similar locality properties as integer-order differential operators.

\begin{figure}[hbt]
	\centering
	\includegraphics[width=0.85\linewidth]{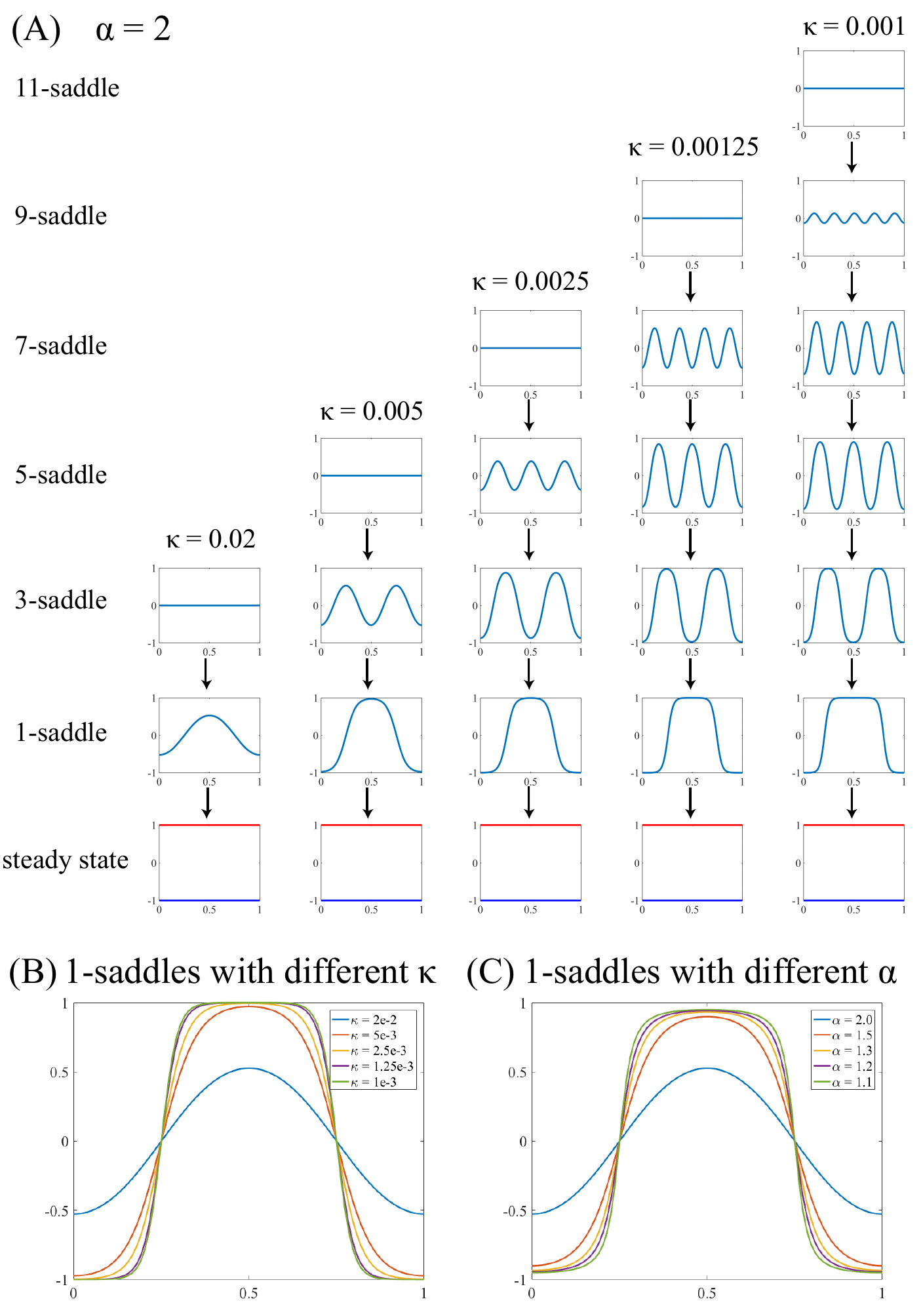}
	\caption{(A) Solution landscapes of 1D integer-order phase field model \eqref{AC} with different $\kappa$ at $\alpha = 2$. (B) The index-$1$ saddle points with different $\kappa$ at $\alpha = 2$. (C) The index-$1$ saddle points with different $\alpha$ at $\kappa = 2\times 10^{-2}$. }
	\label{fig:alphakappa}
\end{figure}

\subsubsection{Variable-order case}
We next compute the solution landscapes of the variable-order space-fractional phase field model. The following three trigonometric functions with different amplitudes
$$1.2 + 0.1\cos(2\pi x), 1.3 + 0.2\cos(2\pi x), 1.55 + 0.45 \cos(2 \pi x) $$ 
are used as the variable fractional order $\alpha(x)$ (cf. the top line of Figure  \ref{fig:variableorder1d}). Due to the loss of symmetry of the variable fractional order $\alpha(x)$, translations of saddle points are no longer stationary solutions as in the constant fractional order case in general. 

\begin{figure}[hbt]
	\centering
	\includegraphics[width=0.9\linewidth]{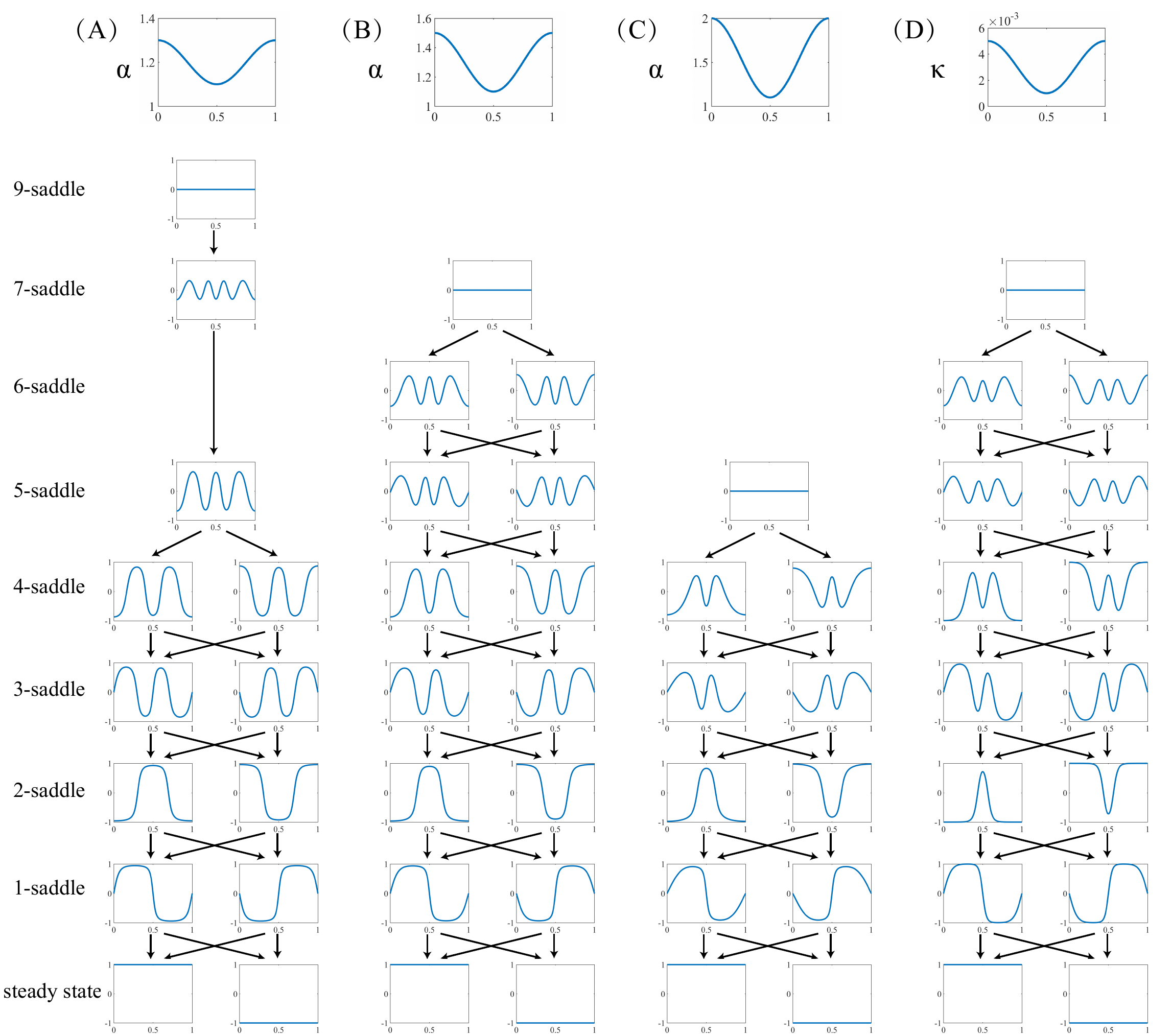}
	\caption{(A)-(C) Solution landscapes of 1D variable-order phase field model with different $\alpha(x)$. (D) Solution landscape of the integer-order phase field model \eqref{VC} with variable coefficient.}
	\label{fig:variableorder1d}
\end{figure}

In Figure \ref{fig:variableorder1d}(A-C), $\alpha(x)$ varies from the constant order $\alpha = 1.2$ gradually and thus the corresponding solution landscapes differ from that of the constant order $\alpha = 1.2$. More low-index saddle points emerge through pitchfork bifurcations, and the interfaces in the region of smaller $\alpha(x)$ are sharper than others. 
Furthermore, the index of $u_0$ decreases when the amplitude of $\alpha(x)$ becomes larger, resulting in some high-index saddle points disappear, which is consistent to the constant order case by increasing $\alpha$ (cf. Figure \ref{fig:constantorder1d}). 

To answer the question that whether it is possible to get similar solutions of variable-order fractional problems by varying the coefficients in their integer-order analogues, we compute the following variable-coefficient phase field model,
\begin{equation}\label{VC}
	\dot{u} = \kappa(x) \Delta u + u - u^3, 
\end{equation}
with 
$$\kappa(x) = 3\times 10^{-3} + 2\times 10^{-3}\cos(2\pi x).$$
This specific $\kappa(x)$ is chosen by matching the index of $u_0$ under the $\alpha(x)$ in Figure \ref{fig:variableorder1d}(B) for comparison. 
In Figure \ref{fig:variableorder1d}(D), we compute the solution landscape for the variable-coefficient case.
Comparing the solution landscapes in Figure \ref{fig:variableorder1d} (B) and (D), we find that all solutions of the variable-order fractional phase field model could be approximately reconstructed by adjusting the variable coefficient in the integer-order phase field model. 
In a similar manner, the solution landscapes in Figure \ref{fig:variableorder1d}(A) and (C) can be reconstructed from the model \eqref{VC} by choosing
$\kappa(x) = 1.25\times 10^{-3} + 2.5\times 10^{-4}\cos(2\pi x)$ and $  1.05\times 10^{-2} + 9.5\times 10^{-3}\cos(2\pi x), $
respectively. 
Numerical simulations show that, similar to the constant fractional order case, the variable-order spectral fractional Laplacian operator exhibits similar behaviors as the integer-order Laplacian with a suitable variable-coefficient. This again demonstrates that the variable-order spectral fractional Laplacian is close to a local differential operator from the view point of solution landscape.

\subsection{Two-dimensional problems}
To show the efficiency of numerical methods, we further test 2D space-fractional phase field model \eqref{AC} on a square domain $[0,1]^2$ with the mesh grid $N = 2^6$ in each direction for spatial discretization. 

\begin{figure}[hbt]
	\centering
	\includegraphics[width=0.9\linewidth]{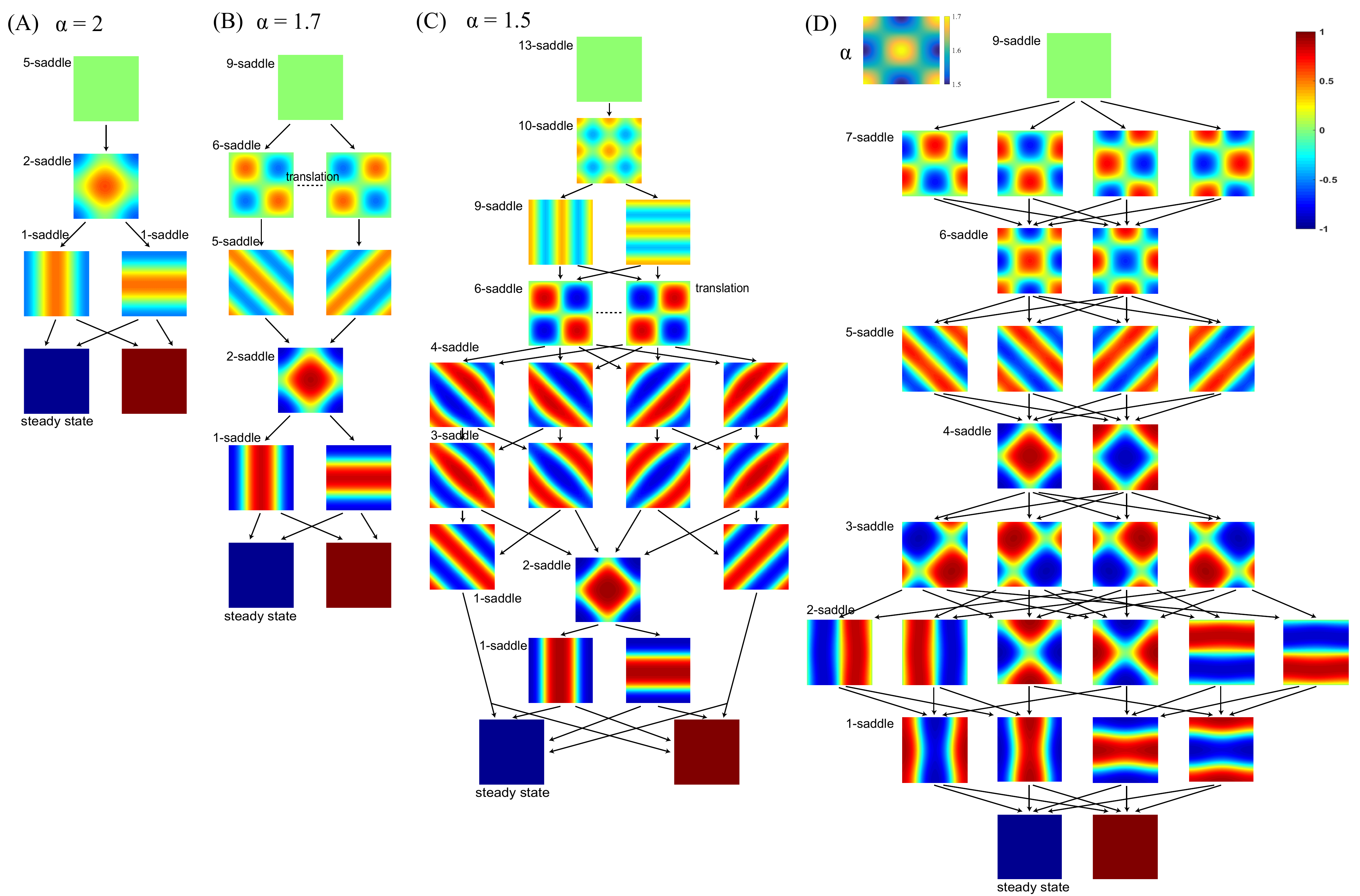}
	\caption{Solution landscapes of 2D space-fractional phase field model with the constant order $\alpha = 2$ (A), $\alpha = 1.7$ (B), $\alpha = 1.5$ (C), and the variable-order $\alpha(x,y) = 1.6 + 0.1 \cos(2 \pi x) \cos(2 \pi y)$ (D) .}
	\label{fig:sl2d}
\end{figure}

The solution landscapes for both constant-order and variable-order cases are shown in Figure \ref{fig:sl2d}. 
When $\alpha = 2$ in Figure \ref{fig:sl2d}(A), which is an integer-order case, the homogeneous state $u_0 \equiv 0$ is an index-$5$ saddle point of the system, and connects a 2-saddle square state, two 1-saddle stripe states and two homogeneous steady states $u_1\equiv 1$ and $u_{-1}\equiv -1$.
In Figure \ref{fig:sl2d}(B) and (C), when $\alpha$ decreases, the index of $u_0$ increases and various states, which are some mixtures of 1D solutions in some way, appear through pitchfork bifurcations. 
We need to point out that, similar stationary solutions in Figure \ref{fig:sl2d}(B) and (C) could be discovered with $\kappa = 0.01, 0.006$ at integer order $\alpha = 2$ \cite{YinSCM}, which again demonstrates that the parameters $\alpha$ and $\kappa$ have comparable affects on the solution landscapes of the space-fractional phase field model.

Furthermore, we choose the variable fractional order as
$$\alpha(x,y) = 1.6 + 0.1 \cos(2\pi x)\cos(2\pi y),$$
and the corresponding solution landscape is shown in Figure \ref{fig:sl2d}(D). More interesting saddle points are identified in the solution landscape of the variable-order phase field model.
The results indicate that, by designing suitable variable fractional orders, specific stationary solutions may be achieved for particular applications.

\section{Conclusions}
In this paper we develop and analyze a fast approximation algorithm for the variable-order space-fractional Laplacian, and implement it in saddle dynamics for efficient construction of the solution landscapes of nonlinear variable-order space-fractional phase field model. By approximating the variable-indexing Fourier modes of the fractional Laplacian by the series expansion method, we prove that the high-order accuracy of the Fourier mode approximation could be obtained using only $O(\ln M)$ terms in the expansion. We then construct the solution landscape of both constant- and variable-order nonlinear space-fractional evolution problems via saddle dynamics. Numerical experiments are performed to substantiate the accuracy and the efficiency of fast approximation algorithm and elucidate the global structures of the stationary solutions of nonlinear space-fractional problems. We further demonstrate the similarities and differences between the spectral fractional Laplacian problems and their integer-order analogues from the viewpoint of solution landscape. 
We conclude from the perspective of the solution landscapes that the stationary solutions of spectral fractional Laplacian problems can be reconfigured by choosing the suitable diffusion coefficients in corresponding integer-order problems, which indicates the locality of the spectral fractional Laplacian operator as the integer-order differential operators.

\section*{Acknowledgements}
This work was partially supported by the National Natural Science Foundation of China No.~12050002 and 21790340, the  International Postdoctoral Exchange Fellowship Program (Talent-Introduction Program) No.~YJ20 210019, and the China Postdoctoral Science Foundation No.~2021TQ0017.

\end{document}